\documentclass[11pt,reqno]{amsart}
\usepackage[T1]{fontenc}
\usepackage[utf8]{inputenc}
\usepackage{lmodern}
\usepackage{amsmath,amssymb,graphicx,microtype}
\usepackage[scale=0.7]{geometry}
\usepackage[colorlinks=true,allcolors=blue]{hyperref}

\newtheorem{theorem}{Theorem}[section]
\newtheorem{lemma}[theorem]{Lemma}
\newtheorem{proposition}[theorem]{Proposition}

\theoremstyle{definition}
\newtheorem{definition}[theorem]{Definition}

\newcommand{\R}{\mathbb R}
\newcommand{\E}{\mathcal E}
\newcommand{\B}{\mathcal B}
\DeclareMathOperator{\conv}{conv}
\DeclareMathOperator{\diam}{diam}
\DeclareMathOperator{\intt}{int}
\newcommand{\norm}[1]{\lVert#1\rVert}

\title{A new family of constant width bodies obtained from Reuleaux polyhedra}

\author[M. G. Mercado-Flores]{Marcela G. Mercado-Flores}
\address[M. G. Mercado-Flores]{Centro de Ciencias Matemáticas, UNAM Campus Morelia, Morelia, Mexico}
\email{mmercado@matmor.unam.mx}

\author[E. Roldán-Pensado]{Edgardo Roldán-Pensado}
\address[E. Roldán-Pensado]{Centro de Ciencias Matemáticas, UNAM Campus Morelia, Morelia, Mexico}
\email{eroldan@matmor.unam.mx}

\subjclass[2020]{52A15, 52B10}
\keywords{Constant width, Reuleaux polyhedron, Meissner polyhedron, elliptic arc}

\date{}
\begin{document}

\begin{abstract}
We give a construction of a body of constant width from a unit Reuleaux polyhedron that preserves all its symmetries. As opposed to the Meissner surgery, our construction modifies both edges of each dual pair and requires no choice between them. For the Reuleaux tetrahedron it recovers a previously known body of constant width with six elliptic edges and tetrahedral symmetry.
\end{abstract}

\maketitle

\section{Introduction}

A convex body has constant width if the distance between any two parallel supporting planes is the same in all directions. In the plane, Reuleaux polygons are the classic examples of such bodies, but their three-dimensional analogues, Reuleaux polyhedra, do not have constant width. In the case of the Reuleaux tetrahedron, Meissner \cite{Mei18} showed that one can round one edge from each dual pair to obtain a body of constant width. This method was later extended to general Reuleaux polyhedra by Montejano and Roldán-Pensado \cite{MR17}, but the choice of which edge to round in each pair is not canonical and may destroy some of the symmetries of the original polyhedron.
For the Reuleaux tetrahedron, an alternative modification using focal conics leads to the so-called peabodies of constant width, introduced by Arellano, Montejano, and Oliveros \cite{AMO23} (see also \cite{AMO24}). However, this construction only works in the tetrahedral setting. For a comprehensive introduction to convex bodies, see \cite{MMO19}.

In this paper, we present a different modification that applies to all Reuleaux polyhedra while preserving all of their symmetries.
Given a Reuleaux polyhedron of radius one, we replace both circular edges of each dual pair by elliptic arcs and intersect all unit balls centered at these arcs.
The resulting body retains the original vertices, contains the elliptic arcs in its boundary, and preserves the symmetry group of the underlying Reuleaux polyhedron.

Throughout this paper, a \emph{Reuleaux polyhedron} is a three-dimensional standard ball polyhedron whose vertices are precisely the centers of its defining balls. We normalize the radii to one. Standardness means that two distinct faces intersect in at most one edge or one vertex. We use the vertex--face duality induced by the centers. In particular, the edges come in dual pairs. For further details on ball polyhedra and duality, we refer to \cite{MR17} (see also \cite{KMP10,MPRR20}).

Let $R$ be such a polyhedron with vertex set $V$. For each dual pair of edges $e,e^*$, denote their endpoints by $A_-,A_+$ and $B_-,B_+$, respectively. Then the four distances of the form $\norm{A_\varepsilon-B_\delta}$ are equal to one. In Section~\ref{sec:arcs}, we associate elliptic arcs $\gamma_e,\gamma_{e^*}$ to each dual pair.
Set
\[
 \Gamma(R)=\bigcup_e\gamma_e,
 \quad
 \E(R)=\bigcap_{p\in\Gamma(R)}B(p,1),
\]
where the union is taken over all circular edges of $R$. Figure~\ref{fig:bodies} shows an example of $\Gamma(R)$ and $\E(R)$ for a specific $R$.

\begin{theorem}\label{thm:main}
Let $R$ be a Reuleaux polyhedron. Then $\E(R)$ has the following properties:
\begin{itemize}
    \item $\E(R)$ has constant width one
    \item $V\subset\Gamma(R)\subset\partial \E(R)$ and $\E(R)\subset R$.
    \item Every point of $\partial \E(R)\setminus\Gamma(R)$, as well as the midpoints of the arcs $\gamma_e$, are smooth.
    \item Every diameter of $\E(R)$ has an endpoint in $\Gamma(R)$.
    \item Every isometry preserving $R$ also preserves $\E(R)$.
\end{itemize}
\end{theorem}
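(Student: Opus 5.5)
We have not yet seen the definition of the elliptic arcs. The natural guess is as follows. For a dual pair with endpoints $A_\pm$ and $B_\pm$, the four cross distances equal one, so the midpoints $M_A,M_B$ of the chords $A_-A_+$ and $B_-B_+$ determine a common axis. We take $\gamma_e$ to be the elliptic arc from $A_-$ to $A_+$ lying in the plane through $A_\pm$ and the segment $M_AM_B$ direction. We choose it so that every point $p\in\gamma_e$ has a partner $p^*\in\gamma_{e^*}$ with $\norm{p-p^*}=1$, and so that the segment $pp^*$ is normal to both arcs; this is a "double normal" parametrization. I will assume the construction in Section~\ref{sec:arcs} gives exactly these properties:
\begin{itemize}
\item[(a)] $\gamma_e$ has endpoints $A_\pm$.
\item[(b)] A monotone bijection $p\mapsto p^*$ exists with $\norm{p-p^*}=1$.
\item[(c)] $\gamma_e\subset R$, and $\gamma_e$ lies inside the spindle of $A_\pm$ region.
\item[(d)] The arc is symmetric under every isometry of $R$ fixing the pair.
\end{itemize}

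The key global claim is $\diam\Gamma(R)\le 1$. Once this holds, $\Gamma\subset\bigcap_{p\in\Gamma}B(p,1)=\E(R)$. I would prove it by cases.
\begin{itemize}
\item[(i)] Two points on the same arc are at distance at most one, since the arc lies in a lens of small width.
\item[(ii)] For points on dual arcs, the distance is at most one by (b) and by monotonicity of the distance along the parametrization.
\item[(iii)] For points on nondual arcs, each arc lies in $\bigcap_{v\in V}B(v,1)\cap\conv$ of its edge's neighbourhood. In particular each arc lies in the intersection of the unit balls centred at the vertices of the two faces meeting along the dual edge. A convexity argument then bounds the distance between points on two such sets, in the spirit of the Montejano--Roldán-Pensado proof for Meissner bodies \cite{MR17}.
\end{itemize}
I expect step (iii) to be the main obstacle. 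The worry is that the elliptic arcs bulge outside the circular edges toward the dual edge, so one must check carefully that they still lie in every unit ball centred at a point of any other arc.

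Constant width then follows from the standard criterion: a body $K=\bigcap_{p\in S}B(p,1)$ with $\diam S\le1$ has constant width one iff $K$ equals its own $1$-spindle hull, i.e. every boundary point $x$ has a point $y\in K$ with $\norm{x-y}=1$. For $x\in\partial\E$, some $p\in\overline\Gamma$ has $\norm{x-p}=1$ by compactness, and $p\in\E$, so the condition holds. Hence $\E$ is "diametrically complete" and therefore of constant width one (Eggleston).

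The remaining items follow quickly.
\begin{itemize}
\item \emph{Containment in $R$.} Since $V\subset\Gamma$, we get $\E\subset\bigcap_{v}B(v,1)=R$.
\item \emph{Boundary points.} For a point $x\in\partial\E\setminus\Gamma$, its opposite point $y$ with $\norm{x-y}=1$ must lie in $\Gamma$. Then $B(y,1)$ is a supporting ball at $x$, and the ball of radius one about $y$ together with constant width forces the normal to be unique, so $x$ is smooth. In general, a constant width body has a unique normal at $x$ iff the opposite set is a single point, and opposite points of non-$\Gamma$ points are in $\Gamma$.
\item \emph{Midpoints.} At the midpoint of $\gamma_e$, symmetry (d) pins the normal cone to a single direction along $M_AM_B$. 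Alternatively, the double normal property shows that only one partner is at distance one.
\item \emph{Diameters.} Every diameter $xy$ of $\E$ satisfies $x\in\partial B(y,1)$. Since $\E$ is an intersection of balls centred on $\Gamma$, the point $y$ must lie on the closure of $\Gamma$ whenever $x$ is not a vertex of the construction, so one endpoint lies in $\Gamma$.
\item \emph{Symmetry.} An isometry of $R$ permutes vertices and dual pairs, so by (d) it permutes the arcs. It therefore maps $\Gamma$ to itself and hence maps $\E$ to itself.
\end{itemize}
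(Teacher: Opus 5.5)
Your constant-width step rests on a false criterion. You assert that for $K=\B(S)$ with $\diam S\le 1$, the condition ``every $x\in\partial K$ has some $y\in K$ with $\norm{x-y}=1$'' already gives $K=\B(K)$. Take $S=V$ as a counterexample. Then $\diam V=1$ and $\B(V)=R$, and every boundary point of $R$ lies on a unit sphere centred at a vertex, which belongs to $R$. Yet $R$ is not of constant width, since the midpoints of a dual pair of circular edges are more than one apart. What your argument actually gives is only $\B(\E(R))\subset\B(\Gamma(R))=\E(R)$. The reverse inclusion $\E(R)\subset\B(\E(R))$ is exactly $\diam\E(R)\le 1$. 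This does not follow from $\diam\Gamma(R)\le 1$, and it is the real content of the theorem.

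The paper obtains $\diam\E(R)\le 1$ from smoothness (Lemma~\ref{lem:criterion}). If a diameter $pr$ had $p\notin\Gamma(R)$, then $p$ has a unique supporting center $q$, the outer normal $p-r$ must be a positive multiple of $p-q$, and strict convexity forces $r=q$. So smoothness off $\Gamma(R)$ has to be proved first, without using constant width. Your smoothness and diameter paragraphs, which assume constant width, are therefore circular. Even granting constant width, knowing that the points opposite $x\notin\Gamma(R)$ lie in $\Gamma(R)$ does not make the normal unique: you need $x$ to have exactly one supporting center. The paper proves this by localization, and nothing in your plan plays this role:
\begin{itemize}
\item a supporting center in the interior of $\gamma_f$ forces $x\in D_{f^*}$ (Lemma~\ref{lem:diam-int});
\item the pairwise disjointness of the Meissner regions $D_e$ (Proposition~\ref{prop:surgery}) then confines all supporting centers, including any vertex, to the single arc $\gamma_f$;
\item strict concavity of $F(u)$ in $u=\cos s$ (Lemma~\ref{lem:unique}) gives a unique maximizer unless $x\in\gamma_{f^*}$.
\end{itemize}

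Several other steps also need repair.
\begin{itemize}
\item \emph{Midpoints.} Symmetry cannot pin down the normal at the midpoint of $\gamma_e$. A general Reuleaux polyhedron has no nontrivial symmetries, and the normal cone there depends on all of $\Gamma(R)$. Supporting centers on other arcs and at vertices must again be excluded by the $D_e$ localization, as in Proposition~\ref{prop:singular}.
\item \emph{Partners.} Your single-partner picture is inaccurate. By \eqref{eq:distance}, $\gamma_A(t)$ is at distance one from both $\gamma_B(t)$ and $\gamma_B(-t)$, which is exactly why the non-midpoint points of the arcs are nonsmooth.
\item \emph{The bound $\diam\Gamma(R)\le 1$.} Case (ii) is settled by the identity \eqref{eq:distance}, not by an unspecified monotonicity. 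Case (iii) needs a concrete argument: $\gamma_e\subset\conv\alpha_e$ (Lemma~\ref{lem:comparison}), so a Meissner body in which neither $\alpha_e$ nor $\alpha_f$ is rounded contains $\gamma_e\cup\gamma_f$ and has diameter one.
\item \emph{Diameters.} You claim that the far endpoint of a diameter lies in $\Gamma(R)$ whenever $x$ is not a vertex. This is unjustified, and it fails for $x=\gamma_A(t)$ with $0<|t|<t_0$. The statement survives there only because $x$ itself lies in $\Gamma(R)$; the correct proof again goes through smoothness off $\Gamma(R)$.
\end{itemize}
Your arguments for $\E(R)\subset R$ and for the symmetry assertion are fine and match the paper.
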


\begin{figure}
    \centering
    \includegraphics[scale=.8]{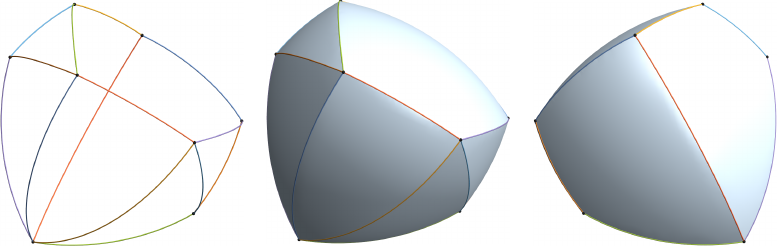}
    \caption{A constant-width body derived from a 7-vertex Reuleaux polyhedron with elliptical opposite edges. First its ellpitical edges, alongside the solid body and a 180-degree rotated view. }
    \label{fig:bodies}
\end{figure}

The tetrahedral case of this construction was studied previously through different viewpoints. Arman, Bondarenko, Prymak, and Radchenko constructed a tetrahedrally symmetric body of constant width as a projection of a four-dimensional body \cite{ABPR25}. Later, Mercado-Flores, Raggi, and Roldán-Pensado obtained the same body from an alternative four-dimensional construction and described it as an intersection of balls centered on six elliptic arcs \cite{MRR25}.  Our construction includes this case and extends it to Reuleaux polyhedra.

The remainder of this paper is organized as follows. In Section~\ref{sec:arcs}, we construct the elliptic arcs and compare them with the original circular edges and the Meissner modification. In Section~\ref{sec:skeleton}, we prove that the elliptic skeleton has diameter one and establish the properties needed for the proof of Theorem~\ref{thm:main}. In Section~\ref{sec:smoothness}, we use these properties to show that every boundary point outside the elliptic skeleton has a unique supporting plane. Finally, in Section~\ref{sec:width}, we deduce constant width from this smoothness property and complete the proof of Theorem~\ref{thm:main}. We also describe the nonsmooth points and the surface patches. The final section includes some final remarks in which we discuss the tetrahedral case, and explain how to combine elliptic and Meissner modifications while preserving the symmetries of the original polyhedron.

\section{A pair of elliptic arcs}\label{sec:arcs}

To modify a Reuleaux polyhedron $R$, we replace each dual pair of circular edges with a pair of elliptic arcs.

Fix four vertices $A_-,A_+,B_-,B_+$ forming a pair of dual edges in $R$. This configuration satisfies
\[
\norm{A_\varepsilon-B_\delta}=1\quad\text{for }\varepsilon,\delta\in\{-,+\},
\quad \norm{A_+-A_-}=a,\quad\norm{B_+-B_-}=b,
\]
where $0<a,b\leq 1$. These conditions place the line $A_-A_+$ in the perpendicular bisector plane of $B_-B_+$, and vice versa. Consequently, the directions of $A_-A_+$ and $B_-B_+$ are orthogonal, and their midpoints lie on their common perpendicular segment.

In our setting, consider a plane $H$ parallel to both cross-segments $A_+B_+$ and $A_-B_-$. The common perpendicular to these two segments intersects $H$ orthogonally at a point $O'$. When projecting the four vertices orthogonally onto $H$, their projections form an isosceles trapezoid, and both $A_+B_+$ and $A_-B_-$ maintain their unit lengths. To ensure a constant distance between corresponding points along the new edge curves, we seek spatial curves whose orthogonal projections onto $H$ are concentric circular arcs centered at $O'$ (see Figure~\ref{fig:proj}).

\begin{figure}
    \centering
    \includegraphics{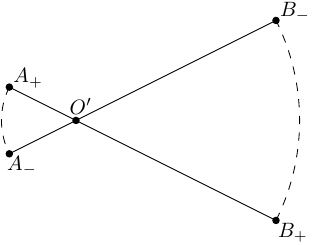}
    \caption{Orthogonal projection onto the plane $H$. The vertices $A_-,A_+,B_-,B_+$ are projected into a isosceles trapezoid. The projections of the desired elliptic arcs should form concentric circular arcs centered at $O'$.}s
    \label{fig:proj}
\end{figure}

To simplify future computations, we define the parameters
\begin{equation}\label{eq:parameters}
c=\frac{a}{\sqrt{a^2+b^2}},\quad
d=\frac{b}{\sqrt{a^2+b^2}},\quad
t_0=\arcsin\frac{\sqrt{a^2+b^2}}{2},\quad h=\cos t_0.
\end{equation}
Note that $c^2+d^2=1$ and $0<t_0\leq\pi/4$. The distance between the midpoints of $A_-A_+$ and $B_-B_+$ is $\sqrt{1-(a^2+b^2)/4}=h$. After applying a suitable isometry, we may align the midpoint of their common perpendicular segment with the origin, and place the four points at
\begin{equation}\label{eq:vertices}
 A_\pm=(\pm a/2,0,hc^2),\quad
 B_\pm=(0,\pm b/2,-hd^2).
\end{equation}

With this positioning, the $z$-coordinates ensure that the common perpendiculars of the lines $A_+B_+$ and $A_-B_-$ pass through the origin in directions $(d,c,0)$ and $(d,-c,0)$. This allows the curves to project to circular arcs under orthogonal projection onto $H$.

\begin{definition}\label{def:arcs}
The elliptic arcs associated with the dual edge pair $A=A_-A_+$ and $B=B_-B_+$ are parameterized by
\begin{equation}\label{eq:arcs}
\gamma_A(t)=(c\sin t,0,c^2\cos t),\quad
\gamma_B(t)=(0,d\sin t,-d^2\cos t),\quad
\text{for }-t_0\leq t\leq t_0.
\end{equation}
Slightly abusing notation, we also denote their images by $\gamma_A$ and $\gamma_B$ (see Figure~\ref{fig:cilindros}).
\end{definition}

\begin{figure}
    \centering
    \includegraphics{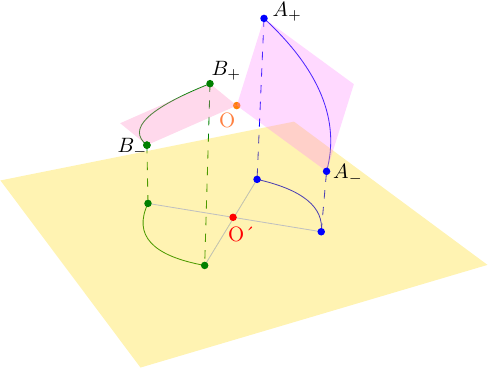}
    \caption{The two elliptical arcs and their projection to $H$.}
    \label{fig:cilindros}
\end{figure}
\begin{lemma}\label{lem:distance}

Evaluating at $t=\pm t_0$ shows that the endpoints of $\gamma_A$ and $\gamma_B$ are the vertices in \eqref{eq:vertices}. Reversing either pair of endpoints merely reverses its parametrization. Interchanging the two pairs interchanges the two arcs. Thus, the curves $\gamma_A$ and $\gamma_B$ are uniquely determined by the dual edge pair independently of labeling choices.

This also has a geometric interpretation using revolving cylinders. The squared distance from a point $(x,y,z)$ to the common perpendicular direction $(d,c,0)$ is
\begin{align}
    \norm{(x,y,z)}^2-((x,y,z)\cdot(d,c,0))^2&=x^2+y^2+z^2-d^2x^2-c^2y^2-2cdxy\nonumber\\
    &=c^2x^2+d^2y^2-2cdxy+z^2\nonumber\\
    &=(cx-dy)^2+z^2.\label{eq:distcilinder}
\end{align}
The cylinders of radii $c^2$ and $d^2$ around this line intersect the coordinate planes $y=0$ and $x=0$ in the ellipses containing the arcs $\gamma_A$ and $\gamma_B$. Using the second common perpendicular direction $(d,-c,0)$ yields the same pair of ellipses.

For $s,t\in[-t_0,t_0]$,
\begin{equation}\label{eq:distance}
\norm{\gamma_A(t)-\gamma_B(s)}^2=1-c^2d^2(\cos t-\cos s)^2.
\end{equation}
Moreover, $\diam(\gamma_A)=a$ and $\diam(\gamma_B)=b$. In particular, $\diam(\gamma_A\cup\gamma_B)=1$, and equality $\norm{\gamma_A(t)-\gamma_B(s)}=1$ holds if and only if $s=t$ or $s=-t$.
\end{lemma}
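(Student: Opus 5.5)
Everything in Lemma~\ref{lem:distance} follows from direct computation with the parametrizations \eqref{eq:arcs}, the identity $c^2+d^2=1$, and the relation $\sin t_0=\sqrt{a^2+b^2}/2$ from \eqref{eq:parameters}. I will first establish the distance formula \eqref{eq:distance}, then compute the two diameters, and finally read off the equality cases.

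For \eqref{eq:distance}, expand
\[
\norm{\gamma_A(t)-\gamma_B(s)}^2=c^2\sin^2t+d^2\sin^2s+(c^2\cos t+d^2\cos s)^2 .
\]
Write $\sin^2=1-\cos^2$ and use $c^2=c^2(c^2+d^2)$ and $d^2=d^2(c^2+d^2)$. Then $c^2\cos^2t$ combines with $c^4\cos^2t$ to give $-c^2d^2\cos^2t$, and the $s$-terms behave the same way. The result is $1-c^2d^2(\cos^2t-2\cos t\cos s+\cos^2s)$, which is \eqref{eq:distance}.

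For the diameters, compute
\[
\norm{\gamma_A(t)-\gamma_A(t')}^2=c^2(\sin t-\sin t')^2+c^4(\cos t-\cos t')^2 .
\]
I will show this is maximized at $t=-t'=t_0$. Use sum-to-product with $u=(t+t')/2$ and $v=(t-t')/2$. The expression becomes $4\sin^2v\,(c^2\cos^2u+c^4\sin^2u)$. The factor $c^2\cos^2u+c^4\sin^2u=c^2-c^2d^2\sin^2u$ is maximized at $u=0$. The factor $\sin^2v$ is maximized at the largest admissible $|v|$, namely $v=t_0\le\pi/4$. Both extremes are attained together at $t=t_0$, $t'=-t_0$.

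The maximum value is therefore $4c^2\sin^2t_0=c^2(a^2+b^2)=a^2$, so $\diam(\gamma_A)=a$. The same computation gives $\diam(\gamma_B)=b$. This two-factor maximization is the only step needing a little care: one must check that $u$ and $v$ range over a set where $u=0$, $|v|=t_0$ is admissible, which holds since $|t|,|t'|\le t_0$.

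Since $a,b\le1$, and by \eqref{eq:distance} every cross distance is at most $1$, we get $\diam(\gamma_A\cup\gamma_B)\le1$. Equality $\norm{\gamma_A(t)-\gamma_B(s)}=1$ holds exactly when $\cos t=\cos s$, because $cd>0$. On $[-t_0,t_0]$ this means $s=\pm t$, and in particular the diameter equals $1$. As a consistency check, the endpoints agree with \eqref{eq:vertices}: $c\sin t_0=a/2$ and $c^2\cos t_0=hc^2$.
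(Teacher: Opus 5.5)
Your proposal is correct and follows essentially the same route as the paper's proof. Both arguments expand $\norm{\gamma_A(t)-\gamma_B(s)}^2$ using $c^2+d^2=1$, bound the sum-to-product factorization $4\sin^2 v\,(c^2\cos^2u+c^4\sin^2u)$ by $4c^2\sin^2t_0=a^2$, and read off the equality case from $\cos t=\cos s$. Your rewriting of the second factor as $c^2-c^2d^2\sin^2u$ makes explicit why it is at most $c^2$, a step the paper states without detail.
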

\begin{proof}
Expanding the squared distance gives
\begin{align*}
    \norm{\gamma_A(t)-\gamma_B(s)}^2 &=\norm{(c\sin t,0,c^2\cos t)-(0,d\sin s,-d^2\cos s)}^2\\
    &=c^2\sin^2 t+d^2\sin^2 s+(c^2\cos t+d^2\cos s)^2\\
    &=1-c^2\cos^2 t-d^2\cos^2 s+(c^2\cos t+d^2\cos s)^2\\
    &=1-c^2(1-c^2)\cos^2 t-d^2(1-d^2)\cos^2 s+ 2c^2d^2\cos t\cos s\\
    &=1-c^2d^2(\cos t-\cos s)^2.
\end{align*}
which gives \eqref{eq:distance}.
Since cosine is strictly decreasing on $[0,t_0]$, $\cos t=\cos s$ is equivalent to $|t|=|s|$. This shows that $\norm{\gamma_A(t)-\gamma_B(s)}=1$ if and only if $s=\pm t$.

For two points on $\gamma_A$, we use the identities
\begin{align*}
 \sin s-\sin t&=2\sin\left(\frac{s-t}{2}\right)\cos\left(\frac{s+t}{2}\right)\\
 \cos s-\cos t&=2\sin\left(\frac{s+t}{2}\right)\sin\left(\frac{s-t}{2}\right).
\end{align*}
Then, since $\sin$ is strictly increasing on $[0,t_0]$ and $c^2<1$,
\begin{align*}
    \norm{\gamma_A(s)-\gamma_A(t)}^2&=\norm{(c\sin s,0,c^2\cos s)-(c\sin t,0,c^2\cos t)}^2\\
    &=c^2(\sin s-\sin t)^2+c^4(\cos s-\cos t)^2\\
    &=4c^2\sin^2\left(\frac{s-t}{2}\right)\left(\cos^2\left(\frac{s+t}{2}\right)+c^2\sin^2\left(\frac{s+t}{2}\right)\right)\\
    &\leq4c^2\sin^2t_0=a^2.
\end{align*}
The endpoints attain this bound, proving that $\diam(\gamma_A)=a$. The proof of $\diam(\gamma_B)=b$ is analogous. We conclude that the diameter of $\gamma_A\cup\gamma_B$ is indeed $1$.
\end{proof}

In the Reuleaux polyhedron, the edge connecting $A_-$ and $A_+$ is a circular arc. When performing a Meissner surgery to obtain a body of constant width, one either leaves this edge unmodified or replaces it with a surface of revolution. We show that our elliptic arc $\gamma_A$ lies strictly between the original Reuleaux edge and the middle curve of the modified edge.

Let $\alpha_A$ be the shorter circular arc from $A_-$ to $A_+$ on the intersection of the spheres $\partial B(B_-,1)\cap\partial B(B_+,1)$, located on the side opposite from the segment $B_-B_+$. This is precisely the circular edge of the Reuleaux polyhedron. We define $\alpha_B$ analogously.

The Meissner modification replaces the edge with a surface of revolution generated by a unit circle. Its intersection with the the plane $y=0$ gives an arc $\mu_A$ of radius $1$. Another way of defining the Meissner modification is by intersecting all unit balls contained in $\alpha_B$.

Because the three curves share the endpoints $A_\pm = (\pm a/2, 0, hc^2)$ and are symmetric with respect to the $z$-axis, we can analyze their relative positions by comparing their heights in the $xz$-plane over the interval $x \in [-a/2, a/2]$.

\begin{description}
    \item[The Reuleaux arc] Since $\alpha_A$ lies on the intersection of the unit spheres centered at $B_-$ and $B_+$, its center is $(0,0,-hd^2)$ and its radius is $\sqrt{1-b^2/4}$. Its height function is
    \begin{equation*}
        Z_R(x) = -hd^2+\sqrt{1-\frac{b^2}{4}-x^2}.
    \end{equation*}
    \item[The elliptic arc] The equation for $\gamma_A$ is immediatley obtained from \eqref{eq:distcilinder}. Setting $y=0$ gives a height function
    \begin{equation*}
        Z_E(x) = c\sqrt{c^2-x^2}.
    \end{equation*}
    \item[The Meissner lower bound] The center of $\mu_A$ is the midpoint of the arc $\alpha_B$, which has coordinates $(0,0,hc^2-\sqrt{1-a^2/4})$. Therefore, its height function is
    \begin{equation*}
        Z_M(x) = hc^2-\sqrt{1-\frac{a^2}{4}}+\sqrt{1-x^2}.
    \end{equation*}
\end{description}
These three functions represent the heights of $\alpha_A$, $\gamma_A$, and the central section $\mu_A$ of the surface obtained by rounding this edge in the Meissner surgery (see Figure~\ref{fig:heights}).

\begin{figure}
    \centering
    \includegraphics[scale=0.58]{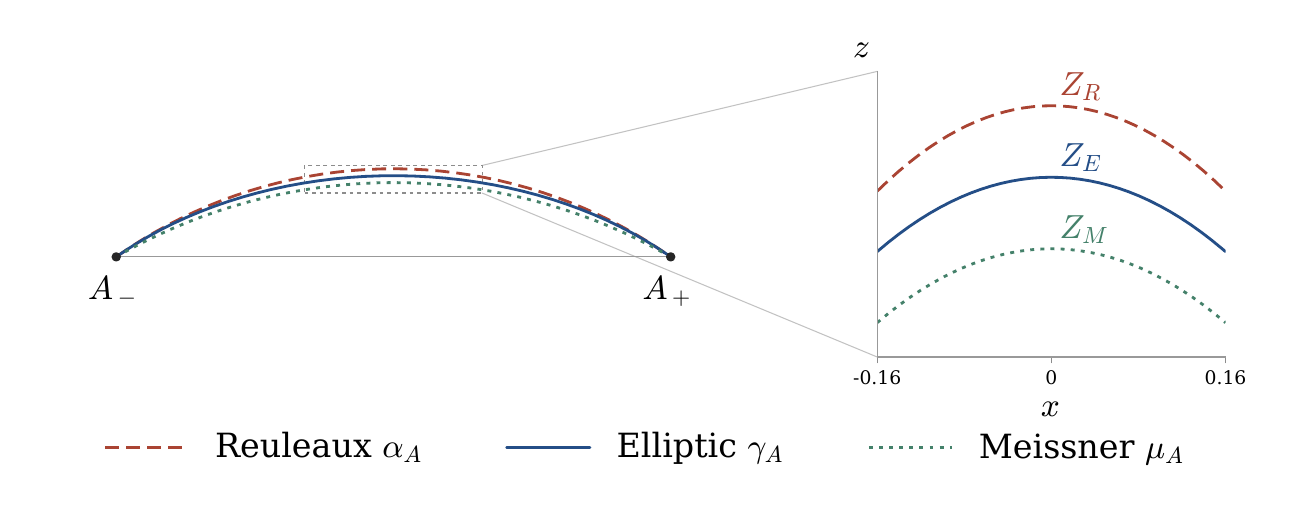}
    \caption{Relative position of the elliptical edge compared to the circular edge of the Reuleaux polyhedron and the modified edge of the Meissner body.}
    \label{fig:heights}
\end{figure}

\begin{lemma}\label{lem:comparison}
If $|x|<a/2$ then
\[
 Z_M(x)<Z_E(x)<Z_R(x).
\]
These three functions agree at the endpoints $x=\pm a/2$.
In other words, every interior point of the elliptic arc $\gamma_A$ is contained in the convex hull of $\alpha_A$ and is strictly outside $\mu_A$.
\end{lemma}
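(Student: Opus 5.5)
The plan is to parametrize everything by the angle $t$ of the elliptic arc. Since $a=c\sqrt{a^2+b^2}=2c\sin t_0$, the substitution $x=c\sin t$ maps $t\in[-t_0,t_0]$ onto $x\in[-a/2,a/2]$, and $|x|<a/2$ corresponds to $|t|<t_0$. In this variable $Z_E=c^2\cos t$. All three functions are even, so it suffices to treat $0\le t\le t_0$.

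\emph{Endpoints.} At $t=t_0$ we have $Z_E=c^2\cos t_0=hc^2$. Using $1-b^2/4-a^2/4=h^2$ gives $Z_R(a/2)=-hd^2+h=hc^2$. In $Z_M(a/2)$ the two square roots $\sqrt{1-a^2/4}$ cancel, leaving $hc^2$. So the three functions agree at $x=\pm a/2$.

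\emph{Upper inequality $Z_E<Z_R$.} This is equivalent to $\sqrt{1-b^2/4-c^2\sin^2 t}>c^2\cos t+hd^2$, and both sides are positive. Rewrite the radicand as $h^2+c^2\sin^2t_0-c^2\sin^2t=h^2+c^2\cos^2 t-c^2\cos^2t_0=h^2d^2+c^2\cos^2 t$. Squaring, the difference of the two sides becomes
\[
h^2d^2(1-d^2)+c^2(1-c^2)\cos^2t-2c^2d^2h\cos t=c^2d^2(\cos t-h)^2 .
\]
Since $h=\cos t_0$ and cosine is strictly decreasing on $[0,t_0]$, this is strictly positive for $|t|<t_0$.

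\emph{Lower inequality $Z_M<Z_E$.} Write $k=\sqrt{1-a^2/4}=\sqrt{1-c^2\sin^2t_0}$. The claim is that $g(t)=c^2\cos t-\sqrt{1-c^2\sin^2t}$ satisfies $g(t)>g(t_0)=hc^2-k$ for $0\le t<t_0$. Differentiating gives
\[
g'(t)=c^2\sin t\Bigl(\frac{\cos t}{\sqrt{1-c^2\sin^2 t}}-1\Bigr).
\]
This is negative on $(0,t_0]$, because $\cos^2t=1-\sin^2t<1-c^2\sin^2t$ when $c<1$. The case $c=1$ cannot occur, since $b>0$. Hence $g$ is strictly decreasing on $[0,t_0]$, and the inequality follows.

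\emph{Geometric reformulation.} The arcs $\alpha_A$ and $\mu_A$ lie in the plane $y=0$, which also contains $\gamma_A$. The convex hull of $\alpha_A$ is the planar region between the chord $z=hc^2$ and the graph of $Z_R$. Since $Z_E\ge hc^2$, points with $hc^2\le Z_E<Z_R$ lie in it. Being strictly outside $\mu_A$ in this plane means lying strictly above the graph of $Z_M$.

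The main obstacle is finding the right form for the first comparison. The radicand simplification $h^2d^2+c^2\cos^2t$ and the identity $1-c^2=d^2$ make the squared difference a perfect square; verifying that algebra is where care is needed. The second comparison is a routine monotonicity argument.
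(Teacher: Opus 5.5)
Your proof is correct and follows the paper's route. Your perfect square $c^2d^2(\cos t-h)^2$ is exactly $1-\norm{\gamma_A(t)-B_\pm}^2$ from \eqref{eq:distance} with $s=\pm t_0$, which is the fact the paper invokes for $Z_E<Z_R$; your monotonicity of $g$ is the paper's derivative argument for $Z_E-Z_M$ rewritten in the variable $t$ via $x=c\sin t$, and you additionally check the endpoint agreement that the paper only asserts.
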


\begin{proof}
For the upper bound, Lemma~\ref{lem:distance} implies that any interior point $\gamma_A(t)$ is at a distance strictly less than one from $B_+$ and $B_-$. When restricted to the plane $y=0$ and above the line $z=hc^2$, this geometric condition forces $Z_E(x)<Z_R(x)$.

For the lower bound, we compute the derivative of their difference
\[
 (Z_E-Z_M)'(x) = x\left(\frac{1}{\sqrt{1-x^2}}-\frac{c}{\sqrt{c^2-x^2}}\right).
\]
Since both functions are even and coincide at $a/2$, it is enough to show that this is negative for $0<x<a/2$. To prove this it is enough to show that
\[
\frac{1}{1-x^2}<\frac{c^2}{c^2-x^2}.
\]
This is equivalent to $c^2-x^2 < c^2-c^2x^2$, which simplifies to $0 < x^2(1-c^2)$. Since $c<1$, this last inequality is true for $x>0$. So the function $Z_E-Z_M$ must be strictly positive on the interval $(-a/2,a/2)$.
\end{proof}

\section{The elliptic skeleton}\label{sec:skeleton}

In this section, we study the structure of the elliptic skeleton, defined as the union
\[
 \Gamma(R)=\bigcup_e\gamma_e
\]
over all circular edges $e$ of the Reuleaux polyhedron $R$. We prove that it is strictly contained within $R$, and analyze how each elliptic arc interacts locally with the regions modified during classic Meissner surgery.

For a compact set $S\subset\R^3$, write
\[
 \B(S)=\bigcap_{q\in S}B(q,1).
\]
Notice that $\B(S)=\B(\conv S)$, by convexity of the norm.

For an edge $e$ of $R$ with dual edge $e^*$, consider the set $D_e$ of points in $R$ that are removed when applying the Meissner modification to $e$. Formally, we define
\[
 D_e=R\setminus\B(\alpha_{e^*}).
\]
This set contains the relative interior of the circular edge $\alpha_e$ and has $\mu_e$ on its boundary.

We state some well-known properties of $D_e$ that are important in the study of Meissner polyhedra. These results can be found in \cite[Section 4]{MR17}, \cite[Lemmas 4.2, 4.4, and Proposition 4.5]{Hyn23}, and \cite[Section 2]{Hyn26}.

\begin{proposition}\label{prop:surgery}
For dual edges $e=A_-A_+$ and $e^*=B_-B_+$, the set $D_e$ is contained in the open dihedral region beyond the planes $A_-A_+B_-$ and $A_-A_+B_+$, on the sides opposite the other $B$ vertex. Its intersection with $\partial R$ meets only the two faces centered at $B_-$ and $B_+$. If $e$ and $f$ belong to different dual pairs of edges, then $D_e\cap D_f=\varnothing$.
\end{proposition}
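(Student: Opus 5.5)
The three assertions of Proposition~\ref{prop:surgery} are already known from the theory of Meissner polyhedra, so my plan is to recover them from the explicit coordinates \eqref{eq:vertices} and cite \cite{MR17,Hyn23,Hyn26} for the combinatorial input rather than re-prove it. The strategy is to describe $D_e=R\setminus\B(\alpha_{e^*})$ concretely. A point $p\in R$ already lies in $B(B_-,1)\cap B(B_+,1)$. It belongs to $D_e$ exactly when some point $q$ in the relative interior of $\alpha_{e^*}$ satisfies $\norm{p-q}>1$. The plan is to show that such $q$ exist only for points near the edge $\alpha_e$.

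For the first claim, I will parametrize $\alpha_{e^*}$ as the arc of the circle centered at the midpoint of $A_-A_+$ in the plane $x=0$. Take a point $p$ with $\norm{p-q}\le 1$ at the endpoints $q=B_\pm$ of the arc. The function $q\mapsto\norm{p-q}^2$ restricted to the circle has the form $\mathrm{const}-2p\cdot q$. For it to exceed $1$ somewhere in the interior of the arc while being at most $1$ at $B_\pm$, the linear functional $p\cdot(q-\text{center})$ must be minimized strictly inside the arc. This happens exactly when the projection of $p$ onto the plane $x=0$ lies in the open angular sector spanned by the directions $B_\pm-(\text{center})$, with $p$ on the far side. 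I will then translate this sector condition into the statement that $p$ lies strictly beyond both planes $A_-A_+B_-$ and $A_-A_+B_+$. Both planes contain the line $A_-A_+$ and pass through $B_\mp$ respectively, so the computation reduces to a two-dimensional one in the $yz$-plane.

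For the second claim, the idea is that the dihedral region from the first step, intersected with $R$, only reaches the part of $\partial R$ near $\alpha_e$. The boundary there consists of the two faces centered at $B_\pm$, which are the faces containing $\alpha_e$. By standardness, any other face meeting this wedge would have to contain a vertex inside the wedge. However, all vertices of $R$ lie in $B(B_\pm,1)$ and in the unit balls around $A_\pm$, which keeps them out of the open wedge near $\alpha_e$. I will cite \cite[Section 4]{MR17} for this step.

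The third claim, that $D_e\cap D_f=\varnothing$ for edges in different dual pairs, is the main obstacle, because it depends on how the dihedral wedges of different edges sit relative to one another. I would first try the following argument. Each wedge for $e$ is cut out by the planes through $e$ and the adjacent vertices $B_\pm$, and the wedges of distinct edges are separated by the face planes through the vertices. If the two edges share a face, then the regions lie on opposite sides of a plane through the common vertex. Otherwise, the regions are separated within $R$ by a face spanned by Reuleaux vertices. If this elementary argument becomes messy, I will fall back on \cite[Lemma 4.4, Proposition 4.5]{Hyn23}, which proves exactly this disjointness.
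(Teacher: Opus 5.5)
Your plan differs from the paper's only in how the first assertion is handled. The paper gives no proof of any of the three claims and refers to \cite[Section 4]{MR17}, \cite{Hyn23} and \cite{Hyn26}. You instead derive $D_e\subset W_e$, with $W_e$ the region \eqref{eq:dihedral}, directly from the coordinates, and cite the same sources for the other two claims. Your first argument is sound, and it shows something the citation hides: this containment uses only the four-vertex configuration and the inequalities $\norm{p-B_\pm}\le 1$, not the global structure of $R$.

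Two details in it need fixing.
\begin{itemize}
\item The circle carrying $\alpha_{e^*}$ is centered at $m=(0,0,hc^2)$, not at the origin, so $\norm{p-q}^2$ is not ``const${}-2p\cdot q$'' on it. The correct expansion is $\norm{p-q}^2=\norm{p-m}^2+r^2-2(p'-m)\cdot(q-m)$, where $r^2=1-a^2/4$ and $p'$ is the projection of $p$ onto the plane $x=0$. In an angular parameter this reads $C-2r\rho\cos(\theta-\psi)$ with $\rho=\norm{p'-m}$. If $\rho=0$, the function is constant and equal to its value at $B_\pm$, so $p\notin D_e$. If $\rho>0$, a maximum on the arc exceeding the endpoint values is an interior critical point, hence the unique maximizer $m-r(p'-m)/\rho$ on the whole circle.
\item The condition on $p'-m$ is that it lies in the reflection through $m$ of the sector spanned by $B_\pm-m$, i.e.\ in the open cone spanned by $m-B_\pm=(0,\mp b/2,h)$. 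This is exactly $z-hc^2>\frac{2h}{b}|y|$. Its bounding lines are the traces of the planes $A_-A_+B_\pm$ in $x=0$, and the sides come out correctly.
\end{itemize}

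For the second and third claims, the citations do all of the work, because the elementary sketches you offer would not go through as written.

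For the second claim:
\begin{itemize}
\item Standardness only restricts how two faces meet each other. It does not force a face that meets the wedge (or $D_e$) to have a vertex inside it, since a face can enter a region through the relative interior of one of its edges or of the face itself.
\item The four constraints $\norm{v-A_\pm}\le 1$, $\norm{v-B_\pm}\le 1$ do not keep a point out of the open wedge: $(0,0,hc^2+\varepsilon)$ satisfies them for small $\varepsilon>0$.
\item What is true, and easy, is that no vertex lies in $D_e$, because $\alpha_{e^*}\subset R\subset B(v,1)$ for every $v\in V$. This alone does not control the faces.
\end{itemize}

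For the third claim, the open wedges themselves are not disjoint. For the Reuleaux tetrahedron with vertices $v_1,\dots,v_4$, the wedges of $v_1v_2$ and $v_1v_3$ both contain the cone $v_1-\{\sum_{i\ge 2}\lambda_i(v_i-v_1):\lambda_i>0\}$. This cone misses $R$ only because $\norm{x-v_j}>1$ there for $j\ge 2$. So any separation argument must use the spherical constraints defining $R$, which is what \cite[Lemma 4.4, Proposition 4.5]{Hyn23} supplies.

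With those citations kept, your proposal is as complete as the paper's treatment and more explicit on the first claim.
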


For clarity, in the coordinates \eqref{eq:vertices}, the dihedral region containing $D_e$ is expressed as
\begin{equation}\label{eq:dihedral}
 z>hc^2+\frac{2h}{b}|y|.
\end{equation}
For $D_{e^*}$, the corresponding condition is $z<-hd^2-(2h/a)|x|$. This implies that $D_e\cap D_{e^*}=\varnothing$ as well, so the sets $D_e$ are pairwise disjoint.

In the following lemma, we place our elliptic skeleton strictly inside the Reuleaux polyhedron $R$ and confirm that its total diameter equals one.

\begin{lemma}\label{lem:skeleton}
The elliptic skeleton satisfies
\[
 \diam\Gamma(R)=1,\quad \Gamma(R)\subset R,
\]
and every interior point of $\gamma_e$ belongs to $D_e\cap\intt R$.
\end{lemma}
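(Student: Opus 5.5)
The plan is to prove the local statement first: every interior point of $\gamma_e$ lies in $D_e\cap\intt R$. The other two claims will then follow from it together with Lemma~\ref{lem:distance} and Proposition~\ref{prop:surgery}.

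Fix a dual pair $e=A_-A_+$, $e^*=B_-B_+$ and work in the coordinates \eqref{eq:vertices}. Let $p=\gamma_A(t)$ with $|t|<t_0$.

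\emph{Step 1: $p\in\intt R$.} By Lemma~\ref{lem:comparison}, $p$ lies in the convex hull of $\alpha_A$, strictly below it in the plane $y=0$. Rather than relying on this picture, I would argue directly that $\norm{p-v}<1$ for every vertex $v\in V$.
\begin{itemize}
\item For $v=B_\pm$ this is \eqref{eq:distance}.
\item For other vertices $v$, the points $A_-$ and $A_+$ satisfy $\norm{A_\pm-v}\le 1$, because $R$ has diameter one and $A_\pm\in R$.
\item The point $p$ lies in the convex hull of $\alpha_A\subset R\subset B(v,1)$. Since $p$ is not on $\alpha_A$ and is interior to this planar convex region, $p\in\intt B(v,1)$ as soon as $\alpha_A$ is not contained in $\partial B(v,1)$. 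That containment would force $v$ to lie on the axis circle of $\alpha_A$, i.e. $v\in\{B_-,B_+\}$ for a standard Reuleaux polyhedron, which is excluded.
\end{itemize}
Strictness is automatic here: the convex hull of a circular arc lying in a ball has all its non-arc points in the open ball, unless the whole arc is on the sphere.

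\emph{Step 2: $p\in D_e$.} By definition $D_e=R\setminus\B(\alpha_B)$, so it suffices to find a point $q\in\alpha_B$ with $\norm{p-q}>1$. I take $q$ to be the midpoint of $\alpha_B$, namely $q=(0,0,hc^2-\sqrt{1-a^2/4})$ (this midpoint is the point of $\alpha_B$ on the $z$-axis, by the symmetry of the configuration). The unit circle of $\mu_A$ is centred at $q$, and by Lemma~\ref{lem:comparison} we have $Z_E(x)>Z_M(x)$ for $|x|<a/2$. So $p$ lies strictly above the unit circle centred at $q$ in the plane $y=0$. Since $p$ also lies above $q$ and has $|x|<1$, this gives $\norm{p-q}>1$. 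This step is the one I would check most carefully: I need ``above $\mu_A$'' to really mean ``outside the unit disc'', which holds because $Z_M$ is the upper semicircle.

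\emph{Step 3: the global claims.} The inclusion $\Gamma(R)\subset R$ follows from Step~1 together with $V\subset R$.

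For the diameter, note first that $\diam\Gamma(R)\ge 1$ by Lemma~\ref{lem:distance}. For the upper bound, take two points $p\in\gamma_e$ and $p'\in\gamma_f$.
\begin{itemize}
\item If $f\in\{e,e^*\}$, then $\norm{p-p'}\le 1$ by Lemma~\ref{lem:distance}.
\item Otherwise, I use Step~2 and Proposition~\ref{prop:surgery}: $p\in D_e$ and $p'\in D_f$, where these sets are disjoint. The difficult point is to bound $\norm{p-p'}$, and disjointness alone does not do it.
\end{itemize}
Instead I would use convexity. We have $\norm{p-p'}\le1$ whenever $p'\in\B(\gamma_e)$. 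The set $\B(\gamma_e)$ is an intersection of unit balls, which equals $\B(\conv\gamma_e)$, and $\conv\gamma_e\subset\conv\alpha_e\subset R$ by Lemma~\ref{lem:comparison}. So it suffices to show $\gamma_f\subset\B(\conv\alpha_e)=\B(\alpha_e)$. This is exactly the statement that $\gamma_f$ avoids $D_{e^*}=R\setminus\B(\alpha_e)$, given that $\gamma_f\subset R$. Since $f\ne e^*$, we have $\gamma_f\setminus\{\text{endpoints}\}\subset D_f$, which is disjoint from $D_{e^*}$ by Proposition~\ref{prop:surgery}. The endpoints of $\gamma_f$ are vertices, and vertices lie in $\B(\alpha_e)$ because $R$ has diameter one. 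Hence $\norm{p-p'}\le1$, and $\diam\Gamma(R)=1$.
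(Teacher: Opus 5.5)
Your proof is correct in substance once one false justification is replaced, and for the diameter bound it takes a different route from the paper.

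\textbf{Comparison with the paper.} Your Steps~1 and~2 unpack the paper's one-line appeal to Lemma~\ref{lem:comparison}, and they do so correctly. The real difference is the bound $\norm{p-p'}\le 1$ when $f\notin\{e,e^*\}$.
\begin{itemize}
\item The paper chooses a Meissner body in which neither $\alpha_e$ nor $\alpha_f$ is rounded. It notes that this body contains $\gamma_e\cup\gamma_f$, because the regions $D_e,D_f$ are not removed. It then uses the theorem of \cite{MR17} that Meissner bodies have constant width, hence diameter, one.
\item You instead show directly that $\gamma_f\subset\B(\alpha_e)\subset\B(\gamma_e)$, via $\conv\gamma_e\subset\conv\alpha_e$ and $D_f\cap D_{e^*}=\varnothing$.
\end{itemize}
Your version needs only the disjointness part of Proposition~\ref{prop:surgery} and avoids the constant-width theorem for Meissner polyhedra. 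In effect you verify by hand the one property of the Meissner body that the paper uses. The paper's version is shorter but leans on a heavier external result.

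\textbf{The justification to fix.} The claim ``$R$ has diameter one'' is false for every Reuleaux polyhedron. If $\diam R=1$, then $R\subset\B(R)\subset\B(V)=R$, which would make $R$ of constant width, and Reuleaux polyhedra are not. Concretely, in the Reuleaux tetrahedron the midpoints of opposite curved edges are at distance $\sqrt3-\sqrt2/2\approx 1.025$.

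What you actually use is $\norm{A_\pm-v}\le 1$ in Step~1, which is superfluous there anyway, and $V\subset\B(\alpha_e)$ in Step~3. Both follow from $R=\B(V)$: every point of $R$, in particular every point of $\alpha_e$, is within distance one of every vertex. With this replacement the argument is complete.

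\textbf{Minor points.} If $\alpha_A\subset\partial B(v,1)$, then $v$ lies on the axis \emph{line} of the circle containing $\alpha_A$, not on an ``axis circle''. The caveat is unnecessary in any case: for every $v$ with $\alpha_A\subset B(v,1)$, each point of $\conv\alpha_A\setminus\alpha_A$ already lies in $\intt B(v,1)$. Relative interior points of the circular segment lie inside the planar disk $B(v,1)\cap P$, where $P$ is the plane of $\alpha_A$. Points of the open chord lie in the open ball by strict convexity.
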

\begin{proof}
Lemma~\ref{lem:comparison} immediately implies that $\gamma_e\subset D_e\cap\intt R$, so $\Gamma(R)\subset R$.

To compute the diameter, consider points $p\in\gamma_e$ and $q\in\gamma_f$. If $f=e$ or $f=e^*$, Lemma~\ref{lem:distance} bounds their distance by $1$. Otherwise, choose a Meissner body in which neither $\alpha_e$ nor $\alpha_f$ has been modified. Since this Meissner body contains $\gamma_e\cup\gamma_f$ and has diameter 1, we obtain $\norm{p-q}\leq 1$. The distances between the original vertices give the equality, establishing that $\diam\Gamma(R)=1$.
\end{proof}

Next, we examine how an elliptic arc modifies the region on the opposite side of the Reuleaux polyhedron. Specifically, we show that any point in $R$ at unit distance from an interior point of $\gamma_e$ must lie inside the opposite dual region $D_{e^*}$. In other words, a ball centered at at point in $\gamma_e$ can only cut off a part from $R$ contained in $D_{e^*}$.

\begin{lemma}\label{lem:diam-int}
Let $p$ be an interior point of $\gamma_e$, and let $q\in R$ be a point such that $\norm{p-q}=1$. Then $q\in D_{e^*}$.
\end{lemma}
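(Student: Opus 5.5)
The plan is to show that $q\notin\B(\alpha_e)$. Since $q\in R$ and, by definition, $D_{e^*}=R\setminus\B(\alpha_{e})$, this gives $q\in D_{e^*}$. The key tool is the identity $\B(\alpha_e)=\B(\conv\alpha_e)$ noted above, combined with the strict convexity of $x\mapsto\norm{x-q}^2$.

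First, I use the coordinates \eqref{eq:vertices} with $e=A_-A_+$. The arc $\alpha_A$ lies in the plane $y=0$, so $\conv\alpha_A$ is the planar circular segment in that plane. It is bounded above by the graph of $Z_R$ and below by the chord $A_-A_+$ at height $z=hc^2$. Write $p=\gamma_A(t)=(x,0,Z_E(x))$ with $|x|<a/2$. I then check that $p$ lies in the relative interior of this planar segment.
\begin{itemize}
\item Lemma~\ref{lem:comparison} gives $Z_E(x)<Z_R(x)$, so $p$ is strictly below $\alpha_A$.
\item $Z_E(x)=c\sqrt{c^2-x^2}$ is strictly decreasing in $|x|$ and equals $hc^2$ at $|x|=a/2$. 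Hence $Z_E(x)>hc^2$, so $p$ is strictly above the chord.
\end{itemize}
Consequently there is a nondegenerate segment $[p-v,p+v]$ with $v\neq0$ parallel to the plane $y=0$, contained in $\conv\alpha_A$, and having $p$ as its midpoint.

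Next, I argue by contradiction. Suppose $q\in\B(\alpha_A)=\B(\conv\alpha_A)$. Then $\norm{p\pm v-q}\le1$. By the parallelogram identity,
\[
\tfrac12\bigl(\norm{p+v-q}^2+\norm{p-v-q}^2\bigr)=\norm{p-q}^2+\norm{v}^2=1+\norm{v}^2>1.
\]
Therefore one of the points $p\pm v$ is at distance greater than $1$ from $q$, which contradicts $q\in\B(\conv\alpha_A)$. Hence $q\notin\B(\alpha_A)$, and so $q\in R\setminus\B(\alpha_A)=D_{e^*}$.

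The only step needing care is confirming that $p$ is in the relative interior of $\conv\alpha_A$, not merely in $\conv\alpha_A$. This follows from the strict inequalities just checked: the upper bound is strict by Lemma~\ref{lem:comparison}, and the lower bound is strict by the monotonicity of $Z_E$. Once that is in place, the rest is the elementary convexity argument above.
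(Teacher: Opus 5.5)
Your proof is correct and is essentially the paper's argument: Lemma~\ref{lem:comparison} places $p$ in $\conv\alpha_e$ but not on $\alpha_e$, so strict convexity of the unit ball rules out $q\in\B(\alpha_e)=\B(\conv\alpha_e)$, whence $q\in R\setminus\B(\alpha_e)=D_{e^*}$. Your parallelogram-identity step just makes the paper's one-line appeal to strict convexity explicit. The strict inequality $Z_E>hc^2$ is more than you need, since a point on the open chord would also be a nontrivial convex combination of $A_-$ and $A_+$.
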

\begin{proof}
By Lemma~\ref{lem:comparison}, $p$ is a non-trivial convex combination of distinct points in $\alpha_e$. If $q\in\B(\alpha_e)$, strict convexity of the unit ball would imply $\norm{p-q}<1$. Therefore, $q\notin\B(\alpha_e)$, which is equivalent to $q\in D_{e^*}$.
\end{proof}

\section{Smoothness and supporting planes}\label{sec:smoothness}

Given a point $p\in\partial\B(S)$, we call $p$ \emph{smooth} if $\B(S)$ has a unique supporting plane at $p$. We say that $q\in S$ is a \emph{supporting center} of $p$ if $\norm{p-q}=1$. Compactness of $S$ ensures that every boundary point has a supporting center.

We start by showing that supporting centers are enough to detect smoothness.

\begin{lemma}\label{lem:normal}
Suppose that $S$ is compact and $L=\B(S)$ has nonempty interior. A point $p\in\partial L$ is smooth if and only if it has a unique supporting center. If this center is $q$, the outward unit normal at $p$ is $p-q$.
\end{lemma}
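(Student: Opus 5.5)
The plan is to identify the normal cone of $L$ at $p$ with the cone generated by the vectors $p-q$, where $q$ ranges over the supporting centers of $p$. Once this identification holds, the lemma follows at once: the normal cone is a single ray exactly when all the vectors $p-q$ are positive multiples of one another. Each such vector is a unit vector, since $\norm{p-q}=1$. Unit vectors that are positive multiples of each other are equal, so $p$ is smooth precisely when there is a unique supporting center $q$. In that case the outward unit normal is $p-q$.

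The first step is the easy inclusion. If $q$ is a supporting center, then $L\subset B(q,1)$ and $p\in\partial B(q,1)$. The tangent plane of the sphere at $p$ therefore supports $B(q,1)$, and hence supports $L$, with outward normal $p-q$. Consequently, two distinct supporting centers produce two distinct supporting planes, so smoothness forces uniqueness of the center.

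The second step is the converse, which I expect to be the main obstacle: every outward normal $u$ at $p$ must lie in the cone generated by $\{p-q : q\in S,\ \norm{p-q}=1\}$. I would argue via a local description of $L$ near $p$. For points $q\in S$ with $\norm{p-q}<1$, compactness of $S$ gives a uniform slack, meaning $\norm{p-q}\le 1-\eta$ whenever $q$ stays away from the set $Q$ of supporting centers. Points near $p$ are then automatically inside those balls. The centers close to $Q$ need a continuity argument. Set $f(x)=\max_{q\in S}\norm{x-q}$, so that $L=\{f\le 1\}$. The function $f$ is a max of convex functions over a compact family, so Danskin's theorem gives
\[
\partial f(p)=\conv\{(p-q)/\norm{p-q} : q\in Q\}.
\]
Nonempty interior gives a Slater point $x_0$ with $f(x_0)<1$, so $0\notin\partial f(p)$. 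The standard formula for the normal cone of a sublevel set then gives $N_L(p)=\R_{\ge0}\,\partial f(p)$, which is exactly the cone generated by the vectors $p-q$ with $q\in Q$.

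Combining the two inclusions gives the identification of the normal cone, and the argument in the first paragraph completes the proof.
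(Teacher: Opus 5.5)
Your proof is correct, but it takes a different route from the paper's. You establish the full identification $N_L(p)=\R_{\ge0}\conv\{p-q : q\in S,\ \norm{p-q}=1\}$ by invoking Danskin's theorem for $f(x)=\max_{q\in S}\norm{x-q}$ and the normal-cone formula for sublevel sets under a Slater condition. Both directions of the lemma then follow.

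The paper handles the easy direction exactly as you do. For the converse, however, it treats only the case of a single supporting center $q$, and it does so by hand. For each unit vector $v$ with $(p-q)\cdot v<0$, it shows $p+\varepsilon v\in\intt L$ for small $\varepsilon>0$ by splitting $S$ into two parts:
\begin{itemize}
\item a neighborhood $U$ of $q$, where $(p-s)\cdot v\le-\delta_1$ gives a first-order decrease of $\norm{p+\varepsilon v-s}^2$;
\item the compact set $S\setminus U$, where there is uniform slack $\norm{p-s}\le1-\delta_2$.
\end{itemize}
This forces every outward normal to be a positive multiple of $p-q$.

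This is exactly the localization you sketch before switching to the cited theorems, carried through to the end. As a result, the paper's argument is elementary and self-contained and needs no verification of hypotheses. If you keep your version, note that $x\mapsto\norm{x-q}$ is not differentiable at $x=q$. You should therefore either use the convex-analysis max formula of Ioffe--Tikhomirov and Valadier, or localize near the supporting centers where distances are bounded away from $0$, or work with $\max_q\norm{x-q}^2$ instead.

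In exchange, your approach yields more than the lemma. It also describes the normal cone at nonsmooth boundary points, as the cone spanned by all vectors $p-q$ with $q$ a supporting center, which the paper never needs.
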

\begin{proof}
Every supporting center $q$ gives a supporting plane with outward unit normal $p-q$, so distinct supporting centers give distinct supporting planes. Thus a smooth point has a unique supporting
center.

Conversely, suppose that $q$ is the unique supporting center of $p$. Let $v$ be a unit vector satisfying $(p-q)\cdot v<0$. We first show that $p+\varepsilon v$ lies in the interior of $L$ for all sufficiently small $\varepsilon>0$.

Choose a neighborhood $U$ of $q$ and $\delta_1>0$ such that $(p-s)\cdot v\le -\delta_1$ for every $s\in S\cap U$. For these centers,
\[
\norm{p+\varepsilon v-s}^2=\norm{p-s}^2+2\varepsilon(p-s)\cdot v+\varepsilon^2\le 1-2\varepsilon\delta_1+\varepsilon^2.
\]
On the other hand, $S\setminus U$ is compact and contains no supporting center of $p$. If it is nonempty then, for every $s\in S\setminus U$, there is a $\delta_2>0$ such that $\norm{p-s}\le 1-\delta_2$ and therefore
\[
\norm{p+\varepsilon v-s}\le 1-\delta_2+\varepsilon.
\]
For sufficiently small $\varepsilon>0$, these bounds are uniformly smaller than $1$. Thus $p+\varepsilon v$ lies in the interior of $L$.

It follows that any outward supporting normal $u$ at $p$ satisfies $u\cdot v\le 0$ whenever $(p-q)\cdot v<0$. This forces $u$ to be a positive multiple of $p-q$. Therefore $p$ is smooth, with outward unit normal $p-q$.
\end{proof}

We now describe the nonsmooth points of $\B(\gamma_B)$ in the region
$z>hc^2$, which contains the interior of $\gamma_A$ (see \eqref{eq:dihedral}). They are precisely the points of $\gamma_A$ other than its midpoint. Each of these points has two supporting centers on $\gamma_B$. At the midpoint, the two centers coincide.

\begin{lemma}\label{lem:unique}
Using the notation in \eqref{eq:parameters}--\eqref{eq:arcs}, let $p=(x,y,z)\in\partial\B(\gamma_B)$ satisfy $z>hc^2$. Then $p$ is nonsmooth if and only if
\[
p\in\gamma_A\setminus\{A_-,\gamma_A(0),A_+\}.
\]
More precisely, if $p=\gamma_A(t)$ with $0<|t|<t_0$, its supporting centers on $\gamma_B$ are exactly $\gamma_B(t)$ and $\gamma_B(-t)$. The only supporting center of $\gamma_A(0)$ is $\gamma_B(0)$.
\end{lemma}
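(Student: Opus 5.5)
By Lemma~\ref{lem:normal}, applied with $S=\gamma_B$, smoothness of $p\in\partial\B(\gamma_B)$ is equivalent to $p$ having a unique supporting center. Since $p\in\partial\B(\gamma_B)$, the function $f(s)=\norm{p-\gamma_B(s)}^2$ on $[-t_0,t_0]$ has maximum exactly $1$, and the supporting centers are the maximizers of $f$. So the plan is to study $f$ directly. Expanding as in the proof of Lemma~\ref{lem:distance} and using $\sin^2 s=1-\cos^2 s$ gives
\[
f(s)=\norm{p}^2+d^2-c^2d^2\cos^2 s+2zd^2\cos s-2yd\sin s .
\]

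\emph{Case $y=0$.} Then $f$ depends only on $u=\cos s\in[\cos t_0,1]=[h,1]$, through the concave quadratic $\varphi(u)=-c^2d^2u^2+2zd^2u$, whose vertex is at $u^*=z/c^2$. The hypothesis $z>hc^2$ says exactly that $u^*>h$, so the maximizing value of $u$ is unique and is not the endpoint $h$.
\begin{itemize}
\item If $u^*\ge 1$, the maximum is at $u=1$, so $s=0$ is the unique center and $p$ is smooth.
\item If $h<u^*<1$, the maximizers are exactly $s=\pm\arccos(z/c^2)$, which are two distinct centers.
\end{itemize}
In the second case, substituting $\cos s=z/c^2$ into $f=1$ should simplify, after using $c^2+d^2=1$, to $x^2+z^2/c^2=c^2$. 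This is the ellipse $\{y=0,\ x^2+z^2/c^2=c^2\}$ containing $\gamma_A$. Writing $p=\gamma_A(t)$, we get $\cos t=z/c^2$, so the centers are $\gamma_B(\pm t)$ with $0<|t|<t_0$. This agrees with Lemma~\ref{lem:distance}, which gives distance $1$ exactly when $s=\pm t$. The boundary case $u^*=1$ gives the midpoint $\gamma_A(0)$, with single center $\gamma_B(0)$. The endpoints $A_\pm$ have $z=hc^2$ and are excluded by hypothesis.

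\emph{Case $y\neq 0$.} I must show that the maximizer is unique. First, $g(s)-g(-s)=-4yd\sin s$, where $g$ is the $s$-dependent part of $f$. Hence, among $s$ and $-s$ with $s\ne0$, the one on the side where $y\sin s<0$ is strictly larger, so all maximizers lie on a single side of $0$ (or at $0$). It remains to rule out two maximizers on the same side. I would try to show that $f'$ changes sign at most once there, using
\[
f'(s)=2d\bigl[\sin s\,(c^2d\cos s-zd)-y\cos s\bigr].
\]
With $z>hc^2\ge c^2\cos t_0$, the factor $c^2\cos s-z$ has constant sign wherever $\cos s<z/c^2$, which holds on all of $[-t_0,t_0]$ when $z\ge c^2$. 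Then $f'/\cos s=2d\bigl[d\tan s\,(c^2\cos s-z)-y\bigr]$, and monotonicity of $\tan s\,(c^2\cos s-z)$ would give a single critical point.

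This monotonicity is the main obstacle, since it is not automatic when $z<c^2$. As a fallback I would argue geometrically. Two distinct maximizers $s_1<s_2$ would put $p$ on the perpendicular bisector plane of $\gamma_B(s_1)\gamma_B(s_2)$. Comparing with the $y=0$ analysis, the restriction of $\B(\gamma_B)$ to that plane should force $p$ to coincide with the point of $\gamma_A$ determined by $s_1,s_2$, contradicting $y\ne0$. Alternatively, I would reduce to the $y=0$ case by showing that for $y\neq0$ the maximizer moves strictly off the symmetric configuration, so that a double maximum is impossible.
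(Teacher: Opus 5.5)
Your reduction to counting maximizers of $f(s)=\norm{p-\gamma_B(s)}^2$ via Lemma~\ref{lem:normal} is correct and matches the paper. So is your whole treatment of the case $y=0$, including the identity $x^2+z^2/c^2=c^2$ and the midpoint case $z=c^2$. The genuine gap is the case $y\neq0$. You correctly show that all maximizers lie in the half of $[-t_0,t_0]$ where $y\sin s\le 0$. You never prove that the maximizer in that half is unique. The monotonicity of $\tan s\,(c^2\cos s-z)=c^2\sin s-z\tan s$ fails whenever $z<c^2$, since its derivative at $s=0$ is $c^2-z>0$. Your fallbacks are not arguments either. Two maximizers $s_1\neq s_2$ in the same half have $s_1\neq -s_2$, and by Lemma~\ref{lem:distance} no point of $\gamma_A$ is at distance $1$ from both $\gamma_B(s_1)$ and $\gamma_B(s_2)$. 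So there is no ``point of $\gamma_A$ determined by $s_1,s_2$'' to compare with.

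The missing idea is to reuse your substitution $u=\cos s$ on the favorable half. Since $\cos$ is strictly decreasing on $[0,t_0]$, that half corresponds bijectively to $u\in[h,1]$. There $f$ equals
\[
F(u)=\norm{p}^2+d^2-c^2d^2u^2+2zd^2u+2d|y|\sqrt{1-u^2},
\]
which is the larger of the two values of $f$ at $\pm s$. Because $-c^2d^2u^2$ is strictly concave and $\sqrt{1-u^2}$ is concave, $F$ is strictly concave on $[h,1]$. It therefore has a unique maximizer $u^*$, and for $y\neq0$ this gives a unique center: $s=0$ if $u^*=1$, and otherwise the $s$ with $\cos s=u^*$ and $y\sin s<0$. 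This is exactly how the paper argues.

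Your derivative approach can also be rescued if you restrict it to the favorable half. For $y<0$ and $s\in[0,t_0]$, critical points solve $d\,\psi(s)=|y|$ with $\psi(s)=z\tan s-c^2\sin s$. Since $\psi$ is convex with $\psi(0)=0$, this positive level is attained at most once. Together with $f'(0)=2d|y|>0$, this makes $f$ unimodal there.
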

\begin{proof}
Since $p\in\partial\B(\gamma_B)$, its supporting centers are precisely the points of $\gamma_B$ farthest from $p$, and their distance from $p$ is $1$. We therefore consider
\begin{align*}
\norm{p-\gamma_B(s)}^2
&=x^2+(y-d\sin s)^2+(z+d^2\cos s)^2\\
&=x^2+y^2+z^2+d^2-c^2d^2\cos^2s+2zd^2\cos s-2yd\sin s.
\end{align*}
For a fixed value $u=\cos s\in[h,1]$, the larger of the values corresponding to $s$ and $-s$ is
\[
F(u)=x^2+y^2+z^2+d^2-c^2d^2u^2+2zd^2u+2d|y|\sqrt{1-u^2}.
\]
This function is strictly concave, so it has a unique maximizer on $[h,1]$.

If $y\ne 0$, this maximizer determines a unique point of $\gamma_B$. Indeed, when $u<1$, the sign of $\sin s$ must be opposite to that of $y$, and when $u=1$, we have $s=0$. Thus $p$ has a unique supporting center and is smooth because of Lemma~\ref{lem:normal}.

Suppose now that $y=0$. The function $F$ is a quadratic whose maximum for arbitrary $u$ occurs at $u=z/c^2$. Since $z>hc^2$, there are two possibilities. If $z\ge c^2$, the unique maximum on $[h,1]$ occurs at $u=1$, so again the only supporting center is $\gamma_B(0)$ and $p$ is smooth. If
\[
hc^2<z<c^2,
\]
the maximum occurs at $u=z/c^2\in(h,1)$ and corresponds to two distinct points of $\gamma_B$. Substituting this value into $F(u)=1$ gives
\[
x^2+\frac{z^2}{c^2}=c^2.
\]
Together with $y=0$ and $hc^2<z<c^2$, equation \eqref{eq:arcs} then gives $p=\gamma_A(t)$ for some $0<|t|<t_0$. By Lemma~\ref{lem:distance}, its supporting centers are exactly $\gamma_B(t)$ and $\gamma_B(-t)$.

Conversely, Lemma~\ref{lem:distance} shows that every $\gamma_A(t)$ with $0<|t|<t_0$ has these two distinct supporting centers and is therefore nonsmooth. It also shows that the only supporting center of $\gamma_A(0)$ is $\gamma_B(0)$.
\end{proof}

We now apply this description to $K=\B(\Gamma(R))$ in order to show that every boundary point outside $\Gamma(R)$ is smooth. The results from the previous section allow us to reduce the analysis to a single pair of dual arcs

\begin{proposition}\label{prop:smooth}
Let $K=\B(\Gamma(R))$. Then $\Gamma(R)\subset\partial K$, $K\subset R$, and every point of $\partial K\setminus\Gamma(R)$ is smooth.
\end{proposition}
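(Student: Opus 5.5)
We prove the three assertions in the order $K\subset R$, $\Gamma(R)\subset\partial K$, smoothness.

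\textbf{Step 1: $K\subset R$.} Since $V\subset\Gamma(R)$ by Lemma~\ref{lem:distance}, we have $K=\B(\Gamma(R))\subset\B(V)=R$.

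\textbf{Step 2: $\Gamma(R)\subset\partial K$.} By Lemma~\ref{lem:skeleton}, $\diam\Gamma(R)=1$, so $\Gamma(R)\subset K$. Each point $\gamma_e(t)$ is at distance exactly one from $\gamma_{e^*}(t)\in\Gamma(R)$ by \eqref{eq:distance}. Hence it lies on the sphere $\partial B(\gamma_{e^*}(t),1)$, which bounds one of the balls defining $K$, and so $\gamma_e(t)\in\partial K$.

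\textbf{Step 3: smoothness.} Take $p\in\partial K\setminus\Gamma(R)$. By Lemma~\ref{lem:normal} it suffices to show that $p$ has a unique supporting center in $\Gamma(R)$. ($K$ has nonempty interior, since it contains a Meissner body by the argument of Lemma~\ref{lem:skeleton}.) Let $q$ be a supporting center of $p$, so $q\in\gamma_f$ for some edge $f$.

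\emph{Vertex centers.} Suppose $q\in V$ and $p$ also has a supporting center $q'$ in the interior of some arc $\gamma_e$. Lemma~\ref{lem:diam-int}, applied to $q'$ and $p\in R$, gives $p\in D_{e^*}$.

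\emph{Reduction to one pair.} By Lemma~\ref{lem:diam-int}, any supporting center lying in the interior of an arc $\gamma_e$ forces $p\in D_{e^*}$. Since the sets $D_g$ are pairwise disjoint (Proposition~\ref{prop:surgery}), all interior-arc supporting centers of $p$ lie on arcs $\gamma_e$ with a single common dual $e^*$. Thus they all lie on one arc, which after renaming is $\gamma_B$ with $p\in D_A$. Then $p$ satisfies \eqref{eq:dihedral}, so $z>hc^2$. Since $\B(\Gamma(R))\subset\B(\gamma_B)$ and $p$ is at distance one from a point of $\gamma_B$, we get $p\in\partial\B(\gamma_B)$.

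\emph{Cases.} Two cases remain.
\begin{itemize}
\item If every supporting center of $p$ is a vertex, we use that the vertices are finitely many. Two vertex centers $v\ne w$ of $p$ would force $p\in\partial B(v,1)\cap\partial B(w,1)\cap R$, which lies on an edge of $R$. That edge is $\alpha_e$ for some $e$, so $p$ is a relative interior point of $\alpha_e$ or a vertex. But $\alpha_e$ points in the interior lie in $D_e$ and outside $\B(\alpha_{e^*})\supset$\,\dots. We instead argue that the midpoint of $\gamma_{e^*}$ is strictly farther from such $p$ than $1$ (Lemma~\ref{lem:comparison} puts $\gamma_{e^*}$ strictly beyond the convex hull in the right sense), contradicting $p\in K$. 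A vertex $p$ lies in $\Gamma(R)$, which is excluded.
\item If there is an interior-arc center, Lemma~\ref{lem:unique} on $\B(\gamma_B)$ gives that $p\notin\gamma_A$ has a unique supporting center on $\gamma_B$. It remains to exclude additional vertex centers outside $\gamma_B$, namely vertices not equal to $B_\pm$. For these we use Proposition~\ref{prop:surgery}: $D_A\cap\partial B(v,1)$ forces $v\in\{B_-,B_+\}$, because $D_A$ meets only the faces centered at $B_\pm$. These vertices are endpoints of $\gamma_B$, so they are already covered by the uniqueness on $\gamma_B$.
\end{itemize}

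The main obstacle I expect is the vertex-only case together with the claim that a point of $D_A$ cannot be at unit distance from vertices other than $B_\pm$ while lying in $K$. Here the cited properties of $D_e$ must be used carefully, for instance by replacing $p$'s position relative to faces of $R$ with the strict inequalities from Lemma~\ref{lem:comparison}.
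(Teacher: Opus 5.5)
Your outline follows the paper's proof: the same order of assertions, the same split into ``all supporting centers are vertices'' versus ``some center lies in the interior of an arc'', and, in the second case, the same use of Lemma~\ref{lem:diam-int}, the disjointness of the $D_e$, Proposition~\ref{prop:surgery} and Lemma~\ref{lem:unique}. That second case you handle correctly.

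\textbf{The gap is in the vertex-only case.} There you need that no relative interior point $p$ of an edge $\alpha_e$ lies in $K$; the paper simply asserts that the vertices are the only points of the edges of $R$ lying in $K$. Your justification is that the midpoint of $\gamma_{e^*}$ is at distance greater than $1$ from every such $p$, and this is false. By \eqref{eq:distance},
\[
\norm{A_\pm-\gamma_B(0)}^2=1-c^2d^2(1-h)^2<1 .
\]
By continuity, every point of $\alpha_A$ close to its endpoints is at distance less than $1$ from $\gamma_B(0)$. For the Reuleaux tetrahedron, $\norm{A_+-\gamma_B(0)}^2\approx 0.979$.

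\textbf{How to repair it.} The right witnesses are points of $\gamma_B$ near its endpoints. Take $p=(x,0,z)$ in the relative interior of $\alpha_A$. Then $\norm{p-\gamma_B(s)}^2=F(\cos s)$ with $F(u)=x^2+z^2+d^2+2zd^2u-c^2d^2u^2$. One has $F(h)=\norm{p-B_\pm}^2=1$, while $F'(h)=2d^2(z-c^2h)>0$ because $z=Z_R(x)>hc^2$. Hence $\norm{p-\gamma_B(s)}>1$ for $|s|$ slightly below $t_0$, so $p\notin\B(\gamma_B)\supset K$.

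\textbf{A second, minor error.} Your justification of nonempty interior is wrong: $K$ contains no Meissner body. Suppose $M\subset K$. Then $\Gamma(R)\subset\B(M)=M$. But for a given dual pair, $M$ rounds one edge $e$ and keeps $\alpha_{e^*}\subset M$, so $\diam M=1$ gives $M\subset\B(\alpha_{e^*})$. By Lemma~\ref{lem:skeleton}, this set misses the interior of $\gamma_e$, since that interior lies in $D_e=R\setminus\B(\alpha_{e^*})$. Nonempty interior follows instead from $\conv V\subset K$, which is a consequence of $V\subset\Gamma(R)$ and $\diam\Gamma(R)=1$.
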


\begin{proof}
Lemma~\ref{lem:skeleton} implies $\Gamma(R)\subset K$. By \eqref{eq:distance}, every point of each elliptic arc has a point of its dual arc at distance $1$ and therefore belongs to $\partial K$. Since the vertices $V\subset\Gamma(R)$, we also have $K\subset\B(V)=R$. The convex hull $V$ is a nondegenerate polyhedron contained in $K$, so $K$ has nonempty interior.

Now fix $p\in\partial K\setminus\Gamma(R)$. Suppose that all its supporting centers are vertices. If this includes at least two distinct vertices, then $p$ would be on an edge or vertex of $R$. But the vertices are the only points on the edges of $R$ that are in $K$ which contradicts that $p\notin\Gamma(R)$. Lemma~\ref{lem:normal} therefore shows that $p$ is smooth.

Otherwise, a supporting center of $p$ lies in the interior of some elliptic arc $\gamma_f$. By Lemma~\ref{lem:diam-int}, $p\in D_{f^*}$. The disjointness of the sets $D_e$ shows that every supporting center that is not a vertex lies on $\gamma_f$. If a vertex is a supporting center, then $p\in\partial R$. By Proposition~\ref{prop:surgery}, that vertex must be an endpoint of $f$. Thus all supporting centers of $p$ lie on $\gamma_f$.

Using the coordinates we defined for $f^*=A_-A_+$ and $f=B_-B_+$, \eqref{eq:dihedral} implies $z>hc^2$. Moreover, $p\in\partial\B(\gamma_f)$, since $p\in K$ and has a supporting center on $\gamma_f$. Lemma~\ref{lem:unique} shows that two distinct supporting centers would imply $p\in\gamma_{f^*}$, which contradicts the definition $p$. Hence $p$ has a unique supporting center, and by Lemma~\ref{lem:normal}, it is smooth.
\end{proof}

\section{Constant width and further properties}\label{sec:width}

The smoothness from the previous section is what is needed to prove constant width. The following lemma gives the general
argument and also describes the diameters of the resulting body.

\begin{lemma}\label{lem:criterion}
Let $S\subset\R^n$ be compact set of diameter $1$ and suppose that
$L=\B(S)$ has nonempty interior. Assume that $S\subset\partial L$
and that every point of $\partial L\setminus S$ is smooth.
Then $L$ has constant width one, and every diameter of $L$ has
an endpoint in $S$.
\end{lemma}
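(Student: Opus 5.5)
The plan is to use the classical characterization: a compact convex body $L$ has constant width one if and only if $\diam L\le 1$ and $L$ is diametrically complete, which in turn holds if and only if every boundary point $p\in\partial L$ is an endpoint of a segment of length one contained in $L$. Equivalently, for every unit vector $u$ the supporting plane of $L$ with outer normal $u$ and the one with outer normal $-u$ lie at distance $1$. I will argue directly with supporting planes.

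\textbf{Step 1: $\diam L=1$.} Since $S\subset L=\B(S)$, every point of $L$ is within distance $1$ of every point of $S$. We have $\diam S=1$, so two points of $S$ at distance $1$ give $\diam L\ge1$. For the upper bound, take $x,y\in L$ with $\norm{x-y}$ maximal. Then $y$ is a boundary point and the ball $B(x,\norm{x-y})$ supports $L$ at $y$.

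\textbf{Step 2: one endpoint of a diameter lies in $S$.} If $y\notin S$, then $y$ is smooth. Hence its unique supporting plane has outer normal $(y-x)/\norm{y-x}$, and by Lemma~\ref{lem:normal} it also has the form $y-q$ for the unique supporting center $q\in S$. Therefore $q=y-(y-x)/\norm{y-x}$ lies on the segment $xy$ (or beyond $x$). Since $\norm{y-q}=1$ and $q,x$ lie on the same ray from $y$, we get $\norm{x-y}\le 1$ if $x$ lies between them. If instead $x$ is farther than $q$ from $y$, then $q$ lies in the interior of the segment. Maximality of $\norm{x-y}$ then gives the contradiction $\norm{x-y}>1\ge\norm{x'-y}$; I would handle this by noting that $x\in L\subset B(q',1)$ for all $q'\in S$, and by symmetric reasoning at $x$. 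In practice I plan to run the argument in the opposite direction: fix a direction $u$ and let $y$ be a point of $\partial L$ where $u$ is an outer normal. If $y\in S$, then $-u$-support of $L$ is at least as far as the point $y-u$, and I need $y-u\in L$. If $y\notin S$, the unique supporting center is $q=y-u\in S\subset L$.

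\textbf{Step 3: constant width.} Fix $u$. By a compactness/perturbation argument choose $y$ with outer normal $u$. If $y\notin S$, then $q=y-u\in S\subset L$, so the width in direction $u$ is at least $1$. If $y\in S$, approximate $u$ by directions $u_k$ whose support points $y_k$ are smooth points outside $S$, and apply a limit argument. Such points exist: the set of outer normals at points of $S$ is handled by noting that normals at non-smooth points of a convex body form a set whose complement is dense, or more simply because normals of smooth points of $\partial L$ are dense in the sphere. This gives $y_k-u_k\in S$, and in the limit we obtain a point of $S$ at distance $1$ in direction $-u$. Thus the width is at least $1$ in every direction, while Step 1 gives width at most $\diam L=1$. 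Hence $L$ has constant width one.

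\textbf{Diameters.} For a diameter $xy$ of a constant-width body, the planes through $x$ and $y$ orthogonal to $xy$ support $L$. If $y\notin S$, smoothness together with Lemma~\ref{lem:normal} gives the supporting center $y-(y-x)=x\in S$. So at least one endpoint lies in $S$.

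\textbf{Main obstacle.} The main obstacle is Step 3 when the support point lies in $S$. The density/limit argument must be justified carefully: it needs that smooth boundary points have dense normal set, and that limits of the centers $y_k-u_k\in S$ stay in the closed set $S$.
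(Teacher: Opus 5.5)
There are two genuine gaps. The first is that the bound $\diam L\le 1$, on which your Steps 1 and 3 rest, is never proved. If $x$ or $y$ lies in $S$ the bound is immediate, since $L=\B(S)$. So the only case with content is $x,y\notin S$. There you correctly find the unique supporting center $q=y-(y-x)/\norm{y-x}$ on the ray from $y$ through $x$. But when $q$ falls strictly between $y$ and $x$ you reach no contradiction. The fact $x\in B(q,1)$ only gives $\norm{x-y}\le 2$. The symmetric reasoning at $x$ produces a second center $q'$ with $\norm{q-q'}=2-\norm{x-y}<1$, which is compatible with $\diam S=1$.

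The missing idea is strict convexity combined with the hypothesis $S\subset\partial L$, which your argument never uses. Since $L$ is an intersection of unit balls with a compact set of centers, the open segment between two distinct points of $L$ lies in $\intt L$. As $q\in S\subset\partial L$ and $x\in\partial L$ lie on the same ray from $y$, this forces $q=x$, hence $\norm{x-y}=1$. This is the paper's argument. It also shows at once that every diameter has an endpoint in $S$.

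The second gap is in Step 3, where you handle directions $u$ whose support point lies in $S$ via a false claim. Normals at smooth boundary points need not be dense in the sphere; for a polytope they form a finite set. What is dense is the set of smooth points of $\partial L$, not their normals. In the intended application, each vertex $v\in V\subset\Gamma(R)$ has a normal cone with nonempty interior. By strict convexity, every $u$ in that interior is supported only at $v$, so no approximating sequence of smooth points outside $S$ exists. The hypotheses even allow $\partial L\setminus S=\varnothing$, for example $S=\partial B(0,\tfrac12)$ and $L=B(0,\tfrac12)$.

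The remedy is to drop the direction-by-direction argument. Once $\diam L\le 1$ is known, use the characterization you quoted at the outset. By compactness, every $p\in\partial L$ has some $q\in S$ with $\norm{p-q}=1$, and $q\in S\subset L$. So every boundary point is an endpoint of a unit segment in $L$, and no smoothness is needed for this part. Equivalently, as in the paper, $\diam L=1$ gives $L\subset\B(L)$, and $S\subset L$ gives $\B(L)\subset\B(S)=L$. Hence $L=\B(L)$, so $L$ is complete and has constant width one. Smoothness is used only for the diameter bound.
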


\begin{proof}
Since $L$ is the intersection of unit balls, it is strictly convex. Let $p,r$ be the endpoints of a diameter of $L$. If both belong to $S$, then $\norm{p-r}\le 1$. Otherwise, we may assume that $p\notin S$, so $p$ is smooth. Since $L\subset B(r,\norm{p-r})$, the vector $p-r$ is an outward supporting normal at $p$. By Lemma~\ref{lem:normal}, the outward unit normal at $p$ is $p-q$, where $q\in S$ is its unique supporting center. Thus $q$ and $r$ lie on the same ray from $p$ into $L$. Since both belong to $\partial L$, strict convexity forces $q=r$, so $\norm{p-r}=1$. It follow that every diameter has an endpoint in $S$ and that $\diam L=1$.

Finally, $\diam L=1$ implies $L\subset\B(L)$, and $S\subset L$ implies
\[
\B(L)\subset\B(S)=L.
\]
Therefore $L=\B(L)$. By the characterization of constant-width bodies \cite[Theorem 2.11]{Hyn23}, $L$ has constant width one.
\end{proof}

\subsection{The nonsmooth boundary points}
Proposition~\ref{prop:smooth} shows that all nonsmooth boundary
points of $\E(R)$ lie on the elliptic skeleton. We now determine
which points of the skeleton are nonsmooth.

Note that the point $\gamma_A(0)$ is the midpoint of $\gamma_A$. Let $T$ be the set of midpoints of all the elliptical arcs.

\begin{proposition}\label{prop:singular}
The set of nonsmooth boundary points of $\E(R)$ is exactly
$\Gamma(R)\setminus T$.
In particular, the midpoint of each elliptic arc is smooth,
whereas every other interior point of that arc is nonsmooth.
\end{proposition}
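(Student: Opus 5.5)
By Proposition~\ref{prop:smooth}, every nonsmooth boundary point of $\E(R)=\B(\Gamma(R))$ lies on $\Gamma(R)$. So it remains to classify the points of $\Gamma(R)$ into smooth and nonsmooth, using Lemma~\ref{lem:normal}: a boundary point is smooth exactly when it has a unique supporting center in $\Gamma(R)$. I will treat three kinds of points separately: the vertices, the interior points of an arc other than its midpoint, and the midpoints.

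\emph{Vertices.} Each vertex $A_+$ is an endpoint of the elliptic arcs of every edge of $R$ through it. By Lemma~\ref{lem:distance}, applied to each of these edges, $A_+$ is at distance one from both endpoints $B_\pm$ of the dual edge. Hence $A_+$ has at least two distinct supporting centers and is nonsmooth. Vertices belong to $\Gamma(R)$ but not to $T$, which is consistent with the claim.

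\emph{Interior points of $\gamma_A$.} Take $p=\gamma_A(t)$ with $0<|t|<t_0$, labelling so that $A$ plays the role of $f^*$. By Lemma~\ref{lem:distance}, $\gamma_B(t)$ and $\gamma_B(-t)$ are distinct points of $\Gamma(R)$ at distance one from $p$. Since $p\in\partial\E(R)$ by Proposition~\ref{prop:smooth}, these are two supporting centers. Lemma~\ref{lem:normal} then shows that $p$ is nonsmooth.

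\emph{The midpoint $p=\gamma_A(0)$.} I need to show that $\gamma_B(0)$ is its only supporting center in all of $\Gamma(R)$. I split the possible centers $q\in\Gamma(R)$ with $\norm{p-q}=1$ into four cases.
\begin{itemize}
\item If $q\in\gamma_B$, Lemma~\ref{lem:unique} (or directly \eqref{eq:distance}) gives $q=\gamma_B(0)$.
\item If $q\in\gamma_A$, then $\norm{p-q}\le\diam\gamma_A=a$. When $a<1$ this is immediate. When $a=1$, the explicit formula in the proof of Lemma~\ref{lem:distance} shows that only the endpoint pair attains distance $a$, so $q\ne p$ is impossible at distance one.
\item If $q$ is an interior point of some $\gamma_f$ with $f\ne A,B$, then Lemma~\ref{lem:diam-int}, applied with the roles of $p$ and $q$ swapped, gives $p\in D_{f^*}$. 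This is valid because $p\in R$ by Lemma~\ref{lem:skeleton}. But Lemma~\ref{lem:skeleton} also places $p$ in $D_A$. The sets $D_e$ are pairwise disjoint and $f^*\ne A$, so this is a contradiction.
\item If $q$ is a vertex, then $p\in\partial B(q,1)$. Since $p\in R=\B(V)$, this puts $p$ on $\partial R$. This contradicts $p\in\intt R$ from Lemma~\ref{lem:skeleton}.
\end{itemize}
Hence the supporting center is unique, and Lemma~\ref{lem:normal} shows that the midpoint is smooth.

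The main obstacle is this last case analysis: it requires excluding every competing center from the whole skeleton, not just from the dual arc. The key observation is that Lemma~\ref{lem:diam-int} can be applied symmetrically, with the roles of $p$ and $q$ reversed, and then combined with the disjointness of the sets $D_e$. The fact that $\Gamma(R)$ minus the vertices lies in $\intt R$ handles the vertex centers.
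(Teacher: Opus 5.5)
Your proposal is correct and follows the paper's proof essentially step by step. You show nonsmoothness at vertices and at non-midpoint arc points via two distinct supporting centers and Lemma~\ref{lem:normal}, and smoothness at the midpoint by excluding vertex centers through $p\in\intt R$ and other-arc centers through Lemma~\ref{lem:diam-int} plus disjointness of the $D_e$. The only minor difference is that you handle centers on $\gamma_A$ itself with the diameter bound, whereas the paper absorbs that case into the disjointness argument using $D_e\cap D_{e^*}=\varnothing$.
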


\begin{proof}
Consider a dual pair $e=A_-A_+$ and $e^*=B_-B_+$. If $0<|t|<t_0$, Lemma~\ref{lem:distance} gives two distinct supporting centers of $\gamma_A(t)$, namely $\gamma_B(t)$ and $\gamma_B(-t)$. Thus $\gamma_A(t)$ is nonsmooth by Lemma~\ref{lem:normal}.

Now let $p=\gamma_A(0)$. By Lemma~\ref{lem:skeleton}, $p\in D_e\cap\intt R$, so no vertex of $R$ is a supporting center of $p$. If a supporting center lies in the interior of an arc $\gamma_f$, Lemma~\ref{lem:diam-int} gives $p\in D_{f^*}$. Since the regions $D_e$ are pairwise disjoint, we must have $f=e^*$. All supporting centers of $p$ therefore lie on $\gamma_B$. By Lemma~\ref{lem:unique}, the only such center is $\gamma_B(0)$, so $p$ is smooth.

At each vertex, the centers of the incident faces give distinct supporting centers, so the vertex is nonsmooth.

Finally, Proposition~\ref{prop:smooth} shows that every boundary point outside $\Gamma(R)$ is smooth.
\end{proof}

Although each arc midpoint is smooth, every neighborhood of it
contains nonsmooth points of the same arc.

\subsection{Proof Theorem~\ref{thm:main}}
The proof of our main theorem is now a direct consequence of the previous results.

Lemma~\ref{lem:skeleton} and Proposition~\ref{prop:smooth} verify the hypotheses of Lemma~\ref{lem:criterion} for $S=\Gamma(R)$ and $L=\E(R)$. Thus $\E(R)$ has constant width one, and every diameter has an endpoint in $\Gamma(R)$. Proposition~\ref{prop:smooth} also gives
\[
V\subset\Gamma(R)\subset\partial\E(R), \quad \E(R)\subset R,
\]
and smoothness outside $\Gamma(R)$. Smoothness at the arc midpoints follows from Proposition~\ref{prop:singular}.

For the symmetry assertion, an isometry preserving $R$ permutes its vertices and faces, and therefore its dual edge pairs. The construction in Definition~\ref{def:arcs} depends only on the geometry of each unordered dual pair, so the isometry preserves $\Gamma(R)$ and its ball intersection $\E(R)$. \hfill $\qed$

\subsection{The surface patches}
The calculation in Lemma~\ref{lem:unique} also gives a description of the boundary near each modified edge. In the coordinates of Section~\ref{sec:arcs}, write $p=(x,y,z)$ and set
\[
F_p(u)=x^2+y^2+z^2+d^2-c^2d^2u^2+2zd^2u+2d|y|\sqrt{1-u^2}.
\]
Then $p\in\B(\gamma_B)$ if and only if
\begin{equation*}
\max_{h\le u\le 1}F_p(u)\le 1.
\end{equation*}
Equality holds precisely when $p$ has a supporting center on $\gamma_B$.

If the maximizing parameter lies in $(h,1)$, differentiating $F_p$ gives
\begin{equation*}
d^2(z-c^2u)=\frac{d|y|u}{\sqrt{1-u^2}}.
\end{equation*}
Together with $F_p(u)=1$, this equation describes the corresponding part of $\partial\B(\gamma_B)$ implicitly. The points that also satisfy the remaining ball inequalities belong to $\partial\E(R)$. Boundary portions supported by a fixed endpoint of $\gamma_B$ lie on the unit sphere centered at that endpoint.

There is also a description using circles. If $\gamma_B(s)$, with $|s|<t_0$, is a supporting center of $p$, then the distance from $p$ to $\gamma_B(s)$ is maximal at $s$. Consequently,
\[
\norm{p-\gamma_B(s)}=1, \quad (p-\gamma_B(s))\cdot\gamma_B'(s)=0.
\]
For fixed $s$, these equations describe a unit circle in the plane through $\gamma_B(s)$ perpendicular to $\gamma_B'(s)$. Its intersection with $\E(R)$ is exactly the set of boundary points having $\gamma_B(s)$ as a supporting center. So the boundary portions supported by interior points of $\gamma_B$ can be described by the corresponding portions of these circles.

\section{Final remarks}

\subsection{The tetrahedral case}
If $R$ is the Reuleaux tetrahedron, then $a=b=1$ for every dual pair. Thus
\[
c=d=\frac1{\sqrt2},\quad t_0=\frac\pi4,
\]
and each elliptic arc lies on an ellipse with semiaxes $1/\sqrt2$ and $1/2$, centered at the center of the tetrahedron. These are the six arcs used in \cite{MRR25}, so the resulting bodies coincide. As observed there, this constructed in \cite{ABPR25} in a different way. In particular, Theorem~\ref{thm:main} gives a three-dimensional proof of constant width for that body, while in \cite{MRR25} and \cite{ABPR25} it follows from the fact that a $4$-dimensional body has constant width.

\subsection{Combining elliptic and Meissner modifications}
The elliptic modification can also be combined with Meissner surgery. For each dual pair, we have three choices: round the first circular edge, round the second, or apply the elliptic modification to both.

To describe the resulting body as an intersection of balls form a set $S$ as follows. For each pair treated by Meissner surgery, include the circular edge that remains unmodified. For each pair treated by the elliptic modification, include both elliptic arcs. The mixed body is then $\B(S)$.

These choices can be made independently for different dual pairs. The comparison with Meissner bodies used in Lemma~\ref{lem:skeleton} gives $\diam S=1$. Moreover, the modifications are confined to the pairwise disjoint regions $D_e$. The local properties of Meissner surgery \cite{MR17}, together with the arguments of Section~\ref{sec:smoothness}, give $S\subset\partial\B(S)$ and smoothness at every boundary point outside $S$. Lemma~\ref{lem:criterion} therefore shows that the mixed body has constant width one.

To preserve all symmetries of $R$, we make these choices consistently on each orbit of dual pairs under its symmetry group. Choose a representative pair. If no symmetry of $R$ that preserves this pair exchanges its two edges, either choice extends consistently to the whole orbit. If a symmetry exchanges the two edges, use the elliptic modification throughout that orbit.

In particular, two edges of different lengths cannot be exchanged by a symmetry. For example, one way to preserve all symmetries is to use the elliptic modification on every equal-length pair and round the longer (or shorter) edge of every unequal-length pair.

We expect that combining Meissner and elliptic modifications will yield bodies of minimum volume among those of constant width $1$ invariant under a prescribed symmetry subgroup.

\subsection{Other symmetric constructions}
There is another way to obtain a body of constant width that preserves the symmetries of a given Reuleaux polyhedron. Simply construct two Meissner polyhedra from it and choose opposite edges to modify in each. Then take the Minkowski average of both, the resulting body preserves the symmetries and the vertices of the original Reuleaux polyhedron. From here a whole family of constant-width bodies can be constructed, since any weighted Minkwosky averages of bodies of constant width that preserve some symmetry group also has these properties.

\section*{Acknowledgments}
This work was supported by UNAM-PAPIIT grant IN114726.


\begin{thebibliography}{MPRR20}
\bibitem[AMO23]{AMO23}
I. Arelio, L. Montejano, and D. Oliveros,
\emph{Peabodies of constant width}, Beiträge zur Algebra und Geometrie
\textbf{64} (2023), 367--385.

\bibitem[AMO24]{AMO24}
I. Arelio, L. Montejano, and D. Oliveros,
\emph{Peabodies of constant width}, preprint (2024), arXiv:2408.13241.

\bibitem[ABPR25]{ABPR25}
A. Arman, A. Bondarenko, A. Prymak, and D. Radchenko,
\emph{Small volume bodies of constant width with tetrahedral symmetries},
SIGMA \textbf{21} (2025), Paper No. 109, 8 pp.

\bibitem[Hyn23]{Hyn23}
R. Hynd, \emph{The density of Meissner polyhedra}, preprint (2023), arXiv:2304.04035v2.

\bibitem[Hyn26]{Hyn26}
R. Hynd, \emph{A volume formula for Reuleaux polyhedra}, preprint (2026),
arXiv:2601.11756v1.

\bibitem[KMP10]{KMP10}
Y. S. Kupitz, H. Martini, and M. A. Perles,
\emph{Ball polytopes and the Vázsonyi problem}, Acta Mathematica Hungarica
\textbf{126} (2010), no. 1--2, 99--163.

\bibitem[MMO19]{MMO19}
H. Martini, L. Montejano, and D. Oliveros. \emph{Bodies of Constant Width: An Introduction to Convex Geometry with Applications}. Springer, Cham, 2019.

\bibitem[Mei18]{Mei18}
E. Meissner,
\emph{Über die durch reguläre Polyeder nicht stützbaren Körper}, Vierteljahresschr. Naturfor. Ges. Zürich
\textbf{63} (1918), 544--551.

\bibitem[MRR26]{MRR25}
M. G. Mercado-Flores, M. Raggi, and E. Roldán-Pensado,
\emph{A four-dimensional body of constant width}, Mathematika
\textbf{72} (2026), no. 3, e70113.

\bibitem[MPRR20]{MPRR20}
L. Montejano, E. Pauli, M. Raggi, and E. Roldán-Pensado,
\emph{The graphs behind Reuleaux polyhedra}, Discrete \& Computational
Geometry \textbf{64} (2020), no. 3, 1013--1022.

\bibitem[MR17]{MR17}
L. Montejano and E. Roldán-Pensado,
\emph{Meissner polyhedra}, Acta Mathematica Hungarica \textbf{151} (2017),
no. 2, 482--494.

\end{thebibliography}
\end{document}